\theoremstyle{plain}
\newtheorem{theorem}{Theorem}[section]
\theoremstyle{definition}
\newtheorem{conjecture}[theorem]{Conjecture}
\theoremstyle{remark}
\newcommand{\C}{\mathbb{C}}
\newcommand{\R}{\mathbb{R}}
\newcommand{\Z}{\mathbb{Z}}
\newcommand{\vs}{\vspace}
\newcommand{\ds}{\displaystyle}
\numberwithin{equation}{section} \numberwithin{table}{section}
\begin{document}                                                                          

\title{Remark on the Betti numbers for Hamiltonian circle actions}
\author{Yunhyung Cho}
\address{Department of Mathematics Education, Sungkyunkwan University, Seoul, Republic of Korea. }
\email{yunhyung@skku.edu}
\keywords{Symplectic manifolds, Hamiltonian circle action, Unimodality of Betti numbers, Equivariant cohomology}
\subjclass[2000]{53D20(primary), and 53D05(secondary)}



\begin{abstract}
	In this paper, we establish a certain inequality in terms of Betti numbers of a closed Hamiltonian $S^1$-manifold with isolated fixed points. 
\end{abstract}
\maketitle
\setcounter{tocdepth}{1} 

\section{Introduction}
\label{secIntroduction}

	Let $(M,\omega)$ be a $2n$-dimensional closed symplectic manifold admitting a Hamiltonian torus action with only isolated fixed points. It has been a long-standing open problem whether $M$ admits a 
	K\"{a}hler metric or not. Historically, Delzant \cite{De} proved that if $M$ admits a Hamiltonian $T^n$-action, where the fixed point set is automatically discrete, then $M$ admits a $T^n$-invariant 
	K\"{a}hler metric. 
	Restricting to an $S^1$-action case, several results on the existence of a K\"{a}hler metric were provided in some special cases. 
	For instance, Karshon \cite{Ka} proved that every closed symplectic four manifold admitting a Hamiltonian circle action 
	admits a K\"{a}hler metric. (In fact, the $S^1$-action is induced from a toric action when the fixed points are isolated.) Also if $\dim M = 6$ with $b_2(M)=1$, then 
	it turned out that $M$ admits a 
	K\"{a}hler metric, which was proved by Tolman \cite{T1} and McDuff \cite{McD}. 
	Recently, the author have shown that any 6-dimensional monotone closed semifree Hamiltonian $S^1$-manifold
	admits a K\"{a}hler metric, see \cite{Cho2, Cho3, Cho4}. 
	
	As a counterpart, there were ``candidates'' of closed Hamiltonian $T$-manifolds (with isolated fixed points) which possibly fail to admit K\"{a}hler metrics.
	Tolman \cite{T2} and Woodward \cite{W} constructed a six-dimensional closed Hamiltonian $T^2$-manifold with only isolated fixed points and with 
	no $T^2$-invariant K\"{a}hler metric.
	Surprisingly Goertsches-Kostantis-Zoller \cite{GKZ} have recently shown that examples of Tolman and Woodward 
	indeed admit K\"{a}hler metrics that are not $T^2$-invariant. Thus their result provides a positive evidence for the conjecture of the existence of K\"{a}hler metrics.

	On the other hand, it seems reasonable to ask whether $(M,\omega)$ enjoys K\"{a}hlerian properties, such as the hard Lefschetz property of the symplectic form $\omega$ or 
	the unimodality of even Betti numbers. 
	Recall that every closed K\"{a}hler manifold $(M,\omega, J)$ satisfies the {\em hard Lefschetz property}, that is, 
	\[
		\begin{array}{ccccl}
				[\omega]^{n-k} & : & H^k(M; \R) & \rightarrow & H^{2n-k}(M;\R) \\
							&	& \alpha & \mapsto & \alpha \cup [\omega]^{n-k}
		\end{array}
	\]
	is an isomorphism for every $k=0,1,\cdots, n$. This implies that 
	\[
		[\omega] : H^k(M;\R) \rightarrow H^{k+2}(M;\R)
	\] is injective for every $k$ with $0 \leq k < n$, and therefore the sequence of even (as well as odd) Betti numbers of $M$ is unimodal. In other words,
	\[
		b_k \leq b_{k+2}, \quad k = 0,1,\cdots, n-1
	\]
	where $b_i$ denotes the $i$-th Betti number of $M$. In this paper we deal with the following conjecture. 
	
	\begin{conjecture}\cite{JHKLM}\label{conj_unimodality}
		Let $(M,\omega)$ be a $2n$-dimensional closed symplectic manifold equipped with a Hamiltonian $S^1$-action with only isolated fixed points. Then the sequence of even Betti numbers is 
		{\em unimodal}, i.e., 
		\[
			b_{2i} \leq b_{2i + 2} \quad \text{for every $0 \leq i < \left \lfloor{\frac{n}{2}} \right\rfloor$.}
		\]
	\end{conjecture}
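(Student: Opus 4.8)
The plan is to reduce the statement to a purely combinatorial inequality about the indices of the fixed points, and then to try to realize that inequality as the injectivity of a degree-raising operator on cohomology. First I would record the standard Morse-theoretic input. The moment map $\mu\colon M\to\R$ of the Hamiltonian $S^1$-action is a Morse--Bott function whose critical set is the fixed point set; since the fixed points are isolated, $\mu$ is a genuine Morse function. At a fixed point $p$ the circle acts on $T_pM\cong\C^n$ with nonzero integer weights, and the Morse index of $p$ equals $2\lambda_p$, where $\lambda_p$ is the number of negative weights. Because every index is even, the Morse inequalities are forced to be equalities (the lacunary principle), so $\mu$ is a perfect Morse function: the odd Betti numbers vanish and
\[
	b_{2i}(M)=f_i:=\#\{\,p\in M^{S^1}:\lambda_p=i\,\}.
\]
Reversing the action (replacing $\mu$ by $-\mu$) exchanges positive and negative weights, giving $f_i=f_{n-i}$, which is just Poincar\'e duality. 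Thus the conjecture is exactly the assertion $f_i\le f_{i+1}$ for $0\le i<\lfloor n/2\rfloor$, and the base case $f_0=1\le b_2=f_1$ is immediate, since $[\omega]\neq0$ forces $b_2\ge1$.

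Second, I would try to produce the inequality as a Lefschetz-type injectivity statement. In the K\"ahler case unimodality is a consequence of hard Lefschetz, namely that $\cup[\omega]\colon H^{2i}(M;\R)\to H^{2i+2}(M;\R)$ is injective for $i<n$; so it would suffice to prove this injectivity for $i<n/2$ using the circle action in place of a K\"ahler metric. The natural tool is equivariant cohomology: a perfect Morse function makes $M$ equivariantly formal, so $H^*_{S^1}(M;\R)\cong H^*(M;\R)\otimes\R[t]$ as $\R[t]$-modules, and the localization theorem gives an injective restriction $H^*_{S^1}(M;\R)\hookrightarrow\bigoplus_{p}\R[t]$. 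I would lift $[\omega]$ to the equivariant symplectic class $\omega_{S^1}$, whose restriction to $p$ is governed by $\mu(p)$, and analyze multiplication by $\omega_{S^1}$ through the Morse-theoretic (canonical) basis $\{\alpha_p\}$, whose restrictions are triangular with respect to the partial order induced by $\mu$. Setting $t=0$ recovers $\cup[\omega]$ on $H^*(M)$, and the hope is that the triangular localization data certifies that the relevant block of this map has full rank in degrees below the middle.

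The main obstacle is precisely this injectivity, and it is the reason the conjecture is open. Triangularity of the canonical basis controls the diagonal of the multiplication operator but not its kernel: without a positivity input of Hodge--Riemann type, the off-diagonal localization contributions can conspire so that $\cup[\omega]$ degenerates, and there is no purely topological reason forcing the pairings that would pin down the rank. An alternative route I would keep in reserve is an induction on $\dim M$ via symplectic cutting together with the Duistermaat--Heckman description of how the reduced spaces $\mu^{-1}(c)/S^1$ change across critical values; the difficulty there is that cutting replaces an isolated fixed point by a fixed submanifold, so the isolated-fixed-point hypothesis is not preserved and the induction does not close. For these reasons I expect that the realistic intermediate target is an unconditional but weaker inequality of the type announced in the abstract, from which one would then hope to bootstrap toward the full unimodality statement.
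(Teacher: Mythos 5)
You have not proved the statement, but neither does the paper: this is Conjecture~\ref{conj_unimodality}, attributed to \cite{JHKLM}, and it is stated as an open problem. There is no proof in the paper to compare against, and your write-up is candid about the fact that your own argument does not close. Your Morse-theoretic setup is correct and standard: the moment map is a perfect Morse function with critical points of even index, so $b_{2i}=f_i$ where $f_i$ counts fixed points with $i$ negative weights, $f_i=f_{n-i}$ by Poincar\'e duality, and the conjecture is equivalent to $f_i\le f_{i+1}$ for $i<\lfloor n/2\rfloor$. The genuine gap is exactly the one you name: the reduction to injectivity of $\cup[\omega]\colon H^{2i}\to H^{2i+2}$ below the middle degree is a strictly stronger (hard-Lefschetz-type) statement, and nothing in the equivariant-formality or triangularity data of the canonical classes of \cite{MT} forces the relevant operator to have full rank without a positivity input that is not available here. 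So the proposal correctly diagnoses why the approach stalls, but it cannot be accepted as a proof of the conjecture.

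For context on your closing remark: the weaker unconditional inequality that the paper actually establishes (Theorem~\ref{thm_main}, namely $b_2+\cdots+b_{2+4(n-1)}\le b_4+\cdots+b_{4+4(n-1)}$ in dimensions $8n$ and $8n+4$) is obtained not by analyzing the rank of $\cup[\omega]$ but by a dimension count: assuming the inequality fails, a linear map $\Phi$ restricting a middle-degree equivariant class to the fixed points of index divisible by $4$ must have nontrivial kernel, producing a class $\alpha$ supported (under restriction) only on fixed points of index $\equiv 2 \pmod 4$. Applying ABBV localization to $\int_M \alpha^2\cdot[\omega_H]$ then gives a sum of terms all of one sign (since $\alpha^2|_p$ is a square, $-H(p)>0$ away from the maximum, and the product of weights at such $p$ has a fixed sign), at least one of which is nonzero, contradicting $\int_M \alpha^2\cdot[\omega_H]=0$ on degree grounds. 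This sign trick is the positivity input you were missing, but it only controls an aggregated inequality across several Betti numbers, not the term-by-term unimodality of the conjecture.
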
	
	
	It is worth mentioning that every odd Betti number of $M$ vanishes by Frankel's theorem which states that a moment map is a Morse function whose critical points are of even indices. 
	(See \cite[Theorem IV.2.3]{Au}.) Therefore we only need to care about even Betti numbers of $M$.
	
	In \cite{CK1}, the author and Kim proved Conjecture \ref{conj_unimodality} when $\dim M = 8$. The main goal of this article is to improve the result of \cite{CK1} and prove the following
	inequality, which is automatically satisfied when Conjecture \ref{conj_unimodality} is true.
	
	\begin{theorem}\label{thm_main}
		Let $(M,\omega)$ be a closed symplectic manifold admitting a Hamiltonian circle action with only isolated fixed points where 
		$\dim M = 8n$ or $8n + 4$. . Then 
		\[
			b_2 + \cdots + b_{2 + 4(n-1)} \leq b_4 + \cdots + b_{4 + 4(n-1)}. 
		\]
		In particular when $\dim M = 8$ or $12$, we have 
		\[
			b_2 \leq b_4.
		\]
	\end{theorem}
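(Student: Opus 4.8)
The plan is to reduce the asserted inequality, by Poincar\'e duality alone, to a single statement about the signature of $M$, and then to deduce that statement from the elementary positivity of $[\omega]$. Throughout write $\dim M = 2N$; the hypothesis ``$\dim M = 8n$ or $8n+4$'' is precisely $\dim M \equiv 0 \pmod 4$, i.e. $N$ is even and $\dim M = 4m$ with $m = N/2$ an integer. This is exactly what is needed below, both for the signature $\sigma(M)$ to be defined and for $[\omega]^{m}$ to be a middle--degree class. Set $\beta_j := b_{2j}$, so the two sides are $L = \beta_1 + \beta_3 + \cdots + \beta_{2n-1}$ and $R = \beta_2 + \beta_4 + \cdots + \beta_{2n}$. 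By Poincar\'e duality $\beta_j = \beta_{N-j}$, and since $N$ is even, pairing $\beta_j$ with $\beta_{N-j}$ in the alternating sum gives $\sum_{j=0}^{N}(-1)^j\beta_j = 2\sum_{j=0}^{N/2-1}(-1)^j\beta_j + (-1)^{N/2}\beta_{N/2}$. First I would carry out this folding, tracking the self-dual middle term $\beta_{N/2}$ (of even index $2n$ when $N=4n$, of odd index $2n+1$ when $N=4n+2$); a direct computation shows that in \emph{both} cases it collapses to the single identity
\[
 2\,(R - L) \;=\; \Big(\sum_{k} (-1)^k b_{2k}\Big) + b_N - 2 .
\]
Hence the theorem is equivalent to the one inequality $\sum_{k} (-1)^k b_{2k} + b_N \ge 2$.

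Next I would identify the alternating sum with the signature. Choose an $S^1$-invariant compatible almost complex structure, so that $M$ is almost complex and each isolated fixed point $p$ has well-defined isotropy weights; its Morse index for the moment map is $2\lambda_p$, where $\lambda_p$ is the number of negative weights, whence $b_{2k} = \#\{p : \lambda_p = k\}$. By the fixed-point localization and rigidity of the Hirzebruch $\chi_y$-genus for circle actions with isolated fixed points (Kosniowski's formula), one has $\chi_y(M) = \sum_p (-y)^{\lambda_p} = \sum_k b_{2k}\,(-y)^k$. Evaluating at $y=1$ and invoking Hirzebruch's signature theorem $\chi_1(M) = \sigma(M)$ yields
\[
 \sum_{k} (-1)^k b_{2k} \;=\; \sigma(M).
\]
Here the almost complex structure is essential: the same identity is false for a general oriented $4m$-manifold (e.g. $\CP^2 \,\#\, \CP^2$, which is not almost complex), so this is the step that genuinely uses the circle action and the symplectic structure.

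Finally I would conclude by positivity. Since $\dim M = 4m$, $\sigma(M)$ is the signature of the nondegenerate cup-product form on the middle group $H^{N}(M;\R) = H^{2m}(M;\R)$, a symmetric form of rank $b_N$. The class $[\omega]^{m} \in H^{2m}(M;\R)$ satisfies $[\omega]^m \cup [\omega]^m = [\omega]^{N}$ with $\int_M [\omega]^N > 0$, so it has positive self-pairing; thus the form admits at least one positive eigenvalue, and therefore $\sigma(M) + b_N = 2\cdot(\#\,\text{positive eigenvalues}) \ge 2$. Combined with the first paragraph this gives $R \ge L$, which is the assertion.

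The only nontrivial input is the middle step, the equality $\sum_{k}(-1)^k b_{2k} = \sigma(M)$; everything else is Poincar\'e-duality bookkeeping and the trivial fact that $[\omega]^m$ has positive square. I expect the main point to verify carefully to be, on the one hand, the folding identity of the first paragraph in the two dimensional cases $N=4n$ and $N=4n+2$ (where $\beta_{N/2}$ enters with opposite parity, yet both reduce to the same $\sigma(M)+b_N \ge 2$), and, on the other hand, the precise form and hypotheses of the $\chi_y$-localization that make $\sum_k(-1)^k b_{2k}$ coincide with the signature for these possibly non-K\"ahler manifolds.
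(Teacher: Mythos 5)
Your proposal is correct, and it reaches the theorem by a genuinely different route from the paper. The paper works entirely inside equivariant cohomology: assuming the inequality fails, it uses the McDuff--Tolman canonical classes and a dimension count to produce a class $\alpha$ (of equivariant degree $4n-2$, resp.\ $4n$) whose restriction vanishes at every fixed point of Morse index divisible by $4$, and then evaluates $\int_M \alpha^2\cdot[\omega_H]$ by ABBV localization to get a sum of terms of a single sign, not all zero, equal to zero --- a contradiction. You instead fold the claimed inequality through Poincar\'e duality into the single statement $\sigma(M)+b_N\ge 2$ (where $\dim M=2N$ and $b_N$ is the middle Betti number), identify $\sum_k(-1)^k b_{2k}$ with $\sigma(M)$ via the rigidity of the $\chi_y$-genus together with $b_{2k}=\#\{p:\lambda_p=k\}$, and finish with the positivity of the self-pairing of $[\omega]^{N/2}$. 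I checked your folding identity in both congruence classes $N=4n$ and $N=4n+2$ (where the middle term enters the alternating sum with opposite parity): it does collapse to $2(R-L)=\sigma(M)+b_N-2$ in both cases, so your argument in fact yields the sharper \emph{equality} $R-L=b_+-1$, where $b_+$ is the number of positive squares of the intersection form on $H^{N}(M;\R)$; the theorem is exactly the statement $b_+\ge 1$. The one input beyond bookkeeping is $\chi_y(M)=\sum_p(-y)^{\lambda_p}$, and you are right to flag it: Kosniowski's formula is stated for holomorphic actions, but the almost complex case (which is what a symplectic manifold with an invariant compatible $J$ gives you) follows from the standard rigidity argument --- the equivariant index computing $\chi_y$ is a Laurent polynomial in $t$ with finite limits as $t\to 0$ and $t\to\infty$, hence constant --- and is stated in precisely this Hamiltonian form in the literature (e.g.\ Godinho--Sabatini). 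As for what each approach buys: the paper's proof stays within the ABBV/canonical-class framework and needs no index theory; yours is shorter, produces an exact formula for $R-L$ rather than only an inequality, and makes transparent why the hypothesis $\dim M\equiv 0\pmod 4$ is what matters (so that the signature is defined and $[\omega]^{N/2}$ lives in middle degree).
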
	
		
\subsection*{Acknowledgements} 
This work is supported by the National Research Foundation of Korea(NRF) grant funded by the Korea government(MSIP; Ministry of Science, ICT \& Future Planning) (NRF-2017R1C1B5018168).

\section{Proof of the main theorem}
\label{secProofOfTheMainTheorem}	
	
	The main technique for proving Theorem \ref{thm_main} is the ABBV-localization due to Atiyah-Bott and Berline-Vergne. 
	Recall that for an $S^1$-manifold $M$, the {\em equivariant cohomology} is defined by  $H^*_{S^1}(M) := H^*(M \times_{S^1} ES^1)$ where 
	$ES^1$ is a contractible space on which $S^1$ acts freely. Then $H^*_{S^1}(M)$ inherits an $H^*(BS^1)$-module structure induced from the projection 
	\[
		\pi : M \times_{S^1} ES^1 \rightarrow BS^1 := ES^1 / S^1. 
	\]
	Note that $H^*(BS^1;\R) \cong H^*(\C P^\infty;\R) = \R[u]$. Moreover, for the inclusion map $i : M^{S^1} \hookrightarrow M$, we have an induced ring homomorphism 
	\[
		i^* : H^*_{S^1}(M;\R) \rightarrow H^*_{S^1}(M^{S^1};\R) \cong H^*(BS^1;\R) \otimes H^*(M^{S^1};\R). 
	\]
	When $M^{S^1} = \{p_1, \cdots, p_m\}$ is discrete, we may express as $H^*(BS^1;\R) \otimes H^*(M^{S^1};\R) \cong \bigoplus_{i=1}^m H^*(BS^1;\R)$ and so 
	\[
		i^*(\alpha) = (f_1, \cdots, f_m), \quad f_i \in \R[u]
	\] 
	for $\alpha \in H^*_{S^1}(M;\R)$.
	We denote by $\alpha|_{p_i} := f_i$ and call it the {\em restriction of $\alpha$ to $p_i$}. By the Kirwan's injectivity theorem \cite{Ki}, the map $i^*$ is injective and hence
	$H^*_{S^1}(M;\R)$ is a free $H^*(BS^1;\R)$-module.	

	\begin{theorem}[ABBV Localization theorem]\cite{AB, BV}\label{thm_localization}
		Let $M$ be a closed $S^1$-manifold with only isolated fixed points and $\alpha \in H^*_{S^1}(M;\R)$. Then we have 
		\[
			\int_M \alpha = \sum_{p \in M^{S^1}} \frac{\alpha|_p}{\left(\Pi_{i=1}^n w_i(p) \right)u^n}.
		\]
		where $w_1(p), \cdots, w_n(p)$ denote the weights of the tangential $S^1$-representation at $p$.
	\end{theorem}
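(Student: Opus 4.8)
The plan is to derive the formula from the Borel--Atiyah--Segal localization theorem, combined with a computation of equivariant Euler classes and the functoriality of equivariant integration. Throughout I work with real coefficients and localize the $\R[u]$-module $H^*_{S^1}(M;\R)$ at the multiplicative set $\{u^k\}_{k \geq 0}$; write $\R[u,u^{-1}]$ for the localized coefficient ring and $H^*_{S^1}(M)_{\mathrm{loc}} := H^*_{S^1}(M;\R) \otimes_{\R[u]} \R[u,u^{-1}]$. The first and central step is to establish the \emph{Localization Theorem}: the restriction $i^* : H^*_{S^1}(M)_{\mathrm{loc}} \to H^*_{S^1}(M^{S^1})_{\mathrm{loc}}$ is an isomorphism. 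The key observation is that on the complement $U := M \setminus M^{S^1}$ the circle acts locally freely, so $H^*_{S^1}(U;\R) \cong H^*(U/S^1;\R)$ is a finite-dimensional real vector space, hence a torsion $\R[u]$-module annihilated by $u^N$ for $N$ large. Feeding this into the long exact sequence of the equivariant pair $(M, M^{S^1})$ (equivalently, excising an equivariant tubular neighborhood of $M^{S^1}$ and applying the equivariant Thom isomorphism), the relative terms are torsion and therefore vanish after inverting $u$, which yields the isomorphism.

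Next I would record the relevant Euler-class and pushforward identities. For each isolated fixed point $p$, let $j_p : \{p\} \hookrightarrow M$ be the inclusion, with equivariant Gysin pushforward $(j_p)_* : H^*_{S^1}(\{p\};\R) \to H^{*+2n}_{S^1}(M;\R)$. The self-intersection formula gives $j_p^* (j_p)_* = \cup\, e_{S^1}(T_p M)$, where the tangent space $T_p M$, which is the entire normal ``bundle'' of the point, splits into weight spaces with weights $w_1(p), \ldots, w_n(p)$, so that
\[
	e_{S^1}(T_p M) = \left(\prod_{i=1}^n w_i(p)\right) u^n,
\]
an invertible element of $\R[u,u^{-1}]$. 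Moreover $j_q^* (j_p)_* = 0$ for $p \neq q$, since distinct fixed points are disjoint.

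I would then assemble the localized decomposition of $\alpha$. Define
\[
	\beta := \sum_{p \in M^{S^1}} (j_p)_* \left( \frac{\alpha|_p}{\left(\prod_{i=1}^n w_i(p)\right) u^n} \right) \in H^*_{S^1}(M)_{\mathrm{loc}}.
\]
Restricting to a fixed point $q$ and using the two identities above kills every term with $p \neq q$ and leaves $\beta|_q = \alpha|_q$, so $i^*(\beta) = i^*(\alpha)$; by the injectivity part of the Localization Theorem, $\alpha = \beta$ in $H^*_{S^1}(M)_{\mathrm{loc}}$. Finally, applying the equivariant pushforward $\int_M : H^*_{S^1}(M;\R) \to H^*(BS^1;\R)$ to the identity $\alpha = \beta$ and using functoriality of pushforward, namely $\int_M \circ\, (j_p)_* = \int_{\{p\}} = \mathrm{id}$, gives
\[
	\int_M \alpha = \sum_{p \in M^{S^1}} \frac{\alpha|_p}{\left(\prod_{i=1}^n w_i(p)\right) u^n},
\]
which is the claimed formula. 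I would close with the remark that although the right-hand side a priori lies in $\R[u,u^{-1}]$, the left-hand side lies in $\R[u]$, so the apparent poles in $u$ necessarily cancel.

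The hard part will be the Localization Theorem in the first step; once it is in hand, everything else is formal. Making the torsion argument precise requires an equivariant tubular-neighborhood and Thom-isomorphism setup (or, alternatively, a Mayer--Vietoris induction over the critical levels of an invariant Morse function), and some care is needed because $M^{S^1}$ may a priori consist of several components — though in our isolated-fixed-point setting this reduces to a finite disjoint union of points and the argument is mild. The Euler-class computation and the functoriality of equivariant integration are standard.
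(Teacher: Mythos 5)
The paper offers no proof of this statement: it is quoted verbatim from Atiyah--Bott and Berline--Vergne \cite{AB, BV} and used as a black box. Your proposal is, in outline, correct, and it is in fact the classical Atiyah--Bott argument: Borel localization (restriction to the fixed set becomes an isomorphism after inverting $u$, because the equivariant cohomology of the locally free part $U = M \setminus M^{S^1}$ is $u$-torsion), the self-intersection identity $j_p^* (j_p)_* = \cup\, e_{S^1}(T_pM)$ with $e_{S^1}(T_pM) = \bigl(\prod_i w_i(p)\bigr)u^n$, and functoriality of the Gysin pushforward to convert the localized decomposition of $\alpha$ into the integration formula. The steps you flag as needing care are the right ones, and they all go through: $U$ has finite homotopy type since $M$ is closed and the fixed set is finite, so $H^*_{S^1}(U;\R) \cong H^*(U/S^1;\R)$ (real coefficients absorb the finite stabilizers) is indeed annihilated by a power of $u$, and the relative terms in the long exact sequence are handled by excision onto a tubular neighborhood exactly as you sketch. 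Two small points deserve explicit mention if you write this up in full. First, $\int_M$ must be extended $\R[u,u^{-1}]$-linearly to the localized module before being applied to the identity $\alpha = \beta$; this is harmless but should be said. Second, the product $\prod_i w_i(p)$ is only well-defined up to sign for a bare $S^1$-manifold --- one must fix a complex structure on each $T_pM$ compatible with the orientation of $M$ to pin down the signed weights; in the paper's setting this is canonical because an $\omega$-compatible invariant almost complex structure makes each $T_pM$ a complex $S^1$-representation, which is implicitly how the paper's later sign arguments (positivity of each summand in its equation \eqref{equ_localization}) depend on this convention. With those caveats your argument is complete in structure and matches the original sources' proof rather than anything internal to the paper.
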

	
	To obtain Theorem \ref{thm_main}, we will apply Theorem \ref{thm_localization} to {\em canonical classes} which form a basis of $H^*_{S^1}(M;\R)$ as an $H^*(BS^1;\R)$-module.
	
	\begin{theorem}\cite{MT}\label{thm_canonical}
		Let $(M,\omega)$ be a $2n$-dimensional closed Hamiltonian $S^1$-manifold with only isolated fixed points. For each fixed point $p \in M^{S^1}$ of index $2k$, there exists 
		a unique class $\alpha_p \in H^{2k}_{S^1}(M; \Z)$ such that 
		\begin{itemize}
			\item $\alpha_p|_q = 0$ for every $q (\neq p) \in M^{S^1}$ with either $H(q) \leq H(p)$ or $\mathrm{ind}(q) \leq 2k$, 
			\item $\alpha_p|_p = \prod_{i=1}^k \lambda_i u$, where $\lambda_1, \cdots, \lambda_k$ are negative weights of the $S^1$-action at $p$.
		\end{itemize}	
		Moreover, the set $\{ \alpha_p ~|~ p \in M^{S^1} \}$ is a basis of $H^*_{S^1}(M;\R)$ as an $H^*(BS^1; \R)$-module.
	\end{theorem}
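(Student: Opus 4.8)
The plan is to treat the moment map $H$ as a Morse function and to build the classes $\alpha_p$ by equivariant Morse theory, using Kirwan injectivity to pin down uniqueness. Since the action is Hamiltonian, $H$ is a Morse function whose critical set is exactly $M^{S^1}$ and whose indices are all even (Frankel); moreover $H$ is \emph{equivariantly perfect}, so the restriction map $H^*_{S^1}(M;\R)\to H^*_{S^1}(M_{\le c};\R)$ to each sublevel set $M_{\le c}=H^{-1}((-\infty,c])$ is surjective and $H^*_{S^1}(M;\R)$ is a free $H^*(BS^1;\R)$-module. After replacing the circle by a generic subcircle I may assume the values $H(p)$, $p\in M^{S^1}$, are pairwise distinct, which turns the clause $H(q)\le H(p)$ into a genuine ordering; I then induct on the index $2k=\mathrm{ind}(p)$, assuming the canonical classes of all fixed points of index $<2k$ have already been produced.

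For existence I first build, for every index-$2k$ fixed point $p$, a \emph{flow-up class}: applying the equivariant Thom isomorphism to the pair $(M_{\le H(p)},M_{<H(p)})$ and pushing the resulting relative class into $H^{2k}_{S^1}(M;\R)$ (possible because $H$ is equivariantly perfect) yields $\hat\alpha_p$ with $\hat\alpha_p|_q=0$ for all $q$ with $H(q)<H(p)$ and $\hat\alpha_p|_p=\prod_{i=1}^k\lambda_i\,u$, the equivariant Euler class of the negative normal bundle at $p$. This already gives the first bullet for points below $p$, but $\hat\alpha_p$ may fail to vanish at points $q$ above $p$ with $\mathrm{ind}(q)\le 2k$. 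I remove these by a single sweep through the fixed points $q$ with $H(q)>H(p)$ in order of increasing $H(q)$: when $\mathrm{ind}(q)<2k$ I subtract $c_q\,\alpha_q$ with $\alpha_q$ the already-constructed canonical class of $q$, and when $\mathrm{ind}(q)=2k$ I subtract $c_q\,\hat\alpha_{q}$. To keep the total degree equal to $2k$ the coefficient must lie in $H^{2k-\mathrm{ind}(q)}(BS^1;\R)$, and this group is nonzero \emph{precisely} when $\mathrm{ind}(q)\le 2k$; since $\alpha_q|_q$ (resp. $\hat\alpha_q|_q$) is a nonzero multiple of $u^{\,\mathrm{ind}(q)/2}$, I can solve for $c_q$ to kill the restriction at $q$. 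Because every subtracted class vanishes on $M_{<H(q)}$, each correction leaves the restrictions at all previously treated (lower) fixed points and at $p$ untouched, so the sweep terminates with a class $\alpha_p$ satisfying both bullets. Integrality can be arranged by taking integral Thom classes; I regard this as bookkeeping and work over $\R$ for the basis statement.

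For uniqueness the key is a divisibility lemma: if $\gamma\in H^*_{S^1}(M;\R)$ restricts to $0$ at every fixed point below $q$, then $\gamma|_q$ is divisible by $e^-(q)=\prod_i\lambda^{(q)}_i\,u^{\,\mathrm{ind}(q)/2}$, a class of degree $\mathrm{ind}(q)$. This follows from the same sublevel-set Thom-isomorphism argument, equivalently from Theorem \ref{thm_localization} applied on $M_{\le H(q)+\epsilon}$. Granting it, let $\beta=\alpha_p-\alpha_p'$ be the difference of two classes satisfying the conclusion; then $\beta\in H^{2k}_{S^1}(M;\R)$, $\beta|_p=0$ by the normalization, and $\beta|_q=0$ whenever $H(q)\le H(p)$ or $\mathrm{ind}(q)\le 2k$. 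Running upward from $H(p)$, at the lowest fixed point $q$ where vanishing is not yet known the lemma makes $\beta|_q$ divisible by a class of degree $\mathrm{ind}(q)$; if $\mathrm{ind}(q)>2k$ this is incompatible with $\deg(\beta|_q)=2k$ unless $\beta|_q=0$, while if $\mathrm{ind}(q)\le 2k$ it is $0$ by hypothesis. Hence $\beta$ vanishes at every fixed point, and Kirwan injectivity forces $\beta=0$.

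Finally, for the basis assertion the same triangular structure yields linear independence: in a relation $\sum_p f_p(u)\,\alpha_p=0$ with $f_p\in\R[u]$, restricting to the fixed point of smallest $H$-value among those with $f_p\neq 0$ kills every term but its own, forcing $f_p\prod_i\lambda_i u=0$ and hence $f_p=0$. Since $H$ is equivariantly perfect, $b_{2k}(M)$ equals the number of index-$2k$ fixed points, so the number of classes $\alpha_p$ of degree $2k$ matches the rank of the degree-$2k$ part of the free module $H^*_{S^1}(M;\R)\cong H^*(M;\R)\otimes H^*(BS^1;\R)$; a homogeneous, linearly independent family of the correct size in a graded free module is a basis. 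The main obstacle is the existence half—arranging the correction sweep so that the extra vanishing at fixed points $q$ above $p$ with $\mathrm{ind}(q)\le 2k$ is achieved without destroying the flow-up normalization—and the degree bookkeeping $c_q\in H^{2k-\mathrm{ind}(q)}(BS^1;\R)$ is exactly the mechanism that makes this possible.
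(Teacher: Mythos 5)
First, a point of order: the paper offers no proof of Theorem \ref{thm_canonical} at all --- it is quoted from McDuff--Tolman \cite{MT} as a black box --- so there is no internal argument to compare yours against. Your reconstruction is, in outline, the standard argument behind \cite{MT}: flow-up classes obtained from the equivariant Thom isomorphism on sublevel sets of the (equivariantly perfect) moment map, an upward correction sweep to enforce vanishing at higher fixed points of index at most $2k$, uniqueness via divisibility of $\gamma|_q$ by the equivariant Euler class $e^-(q)$ of the negative normal bundle together with Kirwan injectivity, and triangularity plus a rank count for the basis statement. All of this is sound over $\Q$ or $\R$, which is all the present paper actually uses.

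Two points do not survive scrutiny. The reduction ``after replacing the circle by a generic subcircle I may assume the values $H(p)$ are pairwise distinct'' is vacuous for a genuine $S^1$-action: $S^1$ has no smaller subcircle, and reparametrizing only rescales $H$ without separating critical values. Fortunately the step is unnecessary, since the Thom-isomorphism argument works level by level even when a critical level contains several fixed points (their negative disc bundles are disjoint). More substantively, the theorem asserts $\alpha_p \in H^{2k}_{S^1}(M;\Z)$, and your correction sweep does not produce an integral class: the coefficient $c_q$ is $(\hat\alpha_p|_q)/e^-(q)$, which requires dividing by the product of the negative weights at $q$, an integer that need not be $\pm 1$. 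Establishing integrality is where the real work in \cite{MT} lies, and calling it ``bookkeeping'' overstates the case; as written you have proved the statement with $\Q$ or $\R$ coefficients only (which, again, suffices for Theorem \ref{thm_main}). A smaller slip: the localization formula of Theorem \ref{thm_localization} cannot literally be ``applied on $M_{\le H(q)+\epsilon}$'', a manifold with boundary; the Thom-isomorphism form of the divisibility lemma that you also invoke is the correct justification there.
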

	
	Now we are ready to prove Theorem \ref{thm_main}. 
	
	\begin{proof}[Proof of Theorem \ref{thm_main}] 
		We first consider the case $\dim M = 8n$. Suppose that
		\begin{equation}\label{equ_assumption}
			b_2 + \cdots + b_{2 + 4(n-1)} > b_4 + \cdots + b_{4 + 4(n-1)}. 
		\end{equation}
		Since $H^*_{S^1}(M)$ is a free module over $H^*(BS^1)$, we have 
		\[
			H^{4n-2}_{S^1}(M) \cong u^0 \otimes H^{4n-2}(M) \oplus u^1 \otimes H^{4n-4}(M) \oplus \cdots \oplus u^{(2n-1)} \otimes H^{0}(M) 
		\]
		which implies that 
		\begin{itemize}
			\item $\dim_{\R} H^{4n-2}_{S^1}(M;\R) \cong b_0 + b_2 + \cdots b_{4n-2}$, and 
			\item $\{\alpha_p \cdot u^{2n - 1 - \frac{1}{2} \mathrm{ind}(p)}~|~ p \in M^{S^1}, ~\mathrm{ind}(p) \leq 4n-2 \}$ is a basis of $H^{4n-2}_{S^1}(M;\R)$ (as an $\R$-vector space) 
			by Theorem \ref{thm_canonical}.
		\end{itemize}

		Now, consider the following map
		\[
			\begin{array}{ccccl}\vs{0.2cm}
				\Phi & : & H^{4n-2}_{S^1}(M;\R) & \rightarrow &  \ds \left(\R^{b_0} \oplus \R^{b_4} \oplus 
				\cdots \oplus \R^{b_{4(n-1)}}\right) \oplus \left(\R^{b_{4n}} \oplus \cdots \oplus \R^{b_{8n-4}} \right) \\
						& 	& \alpha & \mapsto & \left(\alpha_0, \cdots, \alpha_{4n-4}, \alpha_{4n}, \cdots, \alpha_{8n -4} \right)
			\end{array}
		\]
		with the identification 
		\begin{equation}\label{equ_identification}
			\R^{b_{4i}} = \ds \bigoplus_{\mathrm{ind}(p) = 4i} \R \cdot u^{2n-1} \quad \text{and} \quad 
			\alpha_{4i} := \left( \alpha|_p \right)_{\mathrm{ind}(p) = 4i} \in \ds \bigoplus_{\mathrm{ind}(p) = 4i} \R \cdot u^{2n-1}
		\end{equation}
		for each $i=1,\cdots,n$. Since the dimension of the range of the map $\Phi$ satisfies
		\[
			\dim \mathrm{Im} \Phi \leq b_0 + \cdots + b_{4n-4} + (b_{4n} + b_{4n+4} + \cdots + b_{8n-4}) < b_0 + \cdots + b_{4n-4} + (b_{4n-2} + \cdots + b_2) 
		\]
		by the Poincar\'{e} duality and our assumption \eqref{equ_assumption}, the map $\Phi$ has a non-trivial kernel.
		In other words, there exists an element $\alpha \in H^{4n-2}_{S^1}(M;\R)$ such that 
		\[
			\alpha|_p = 0 
		\]
		for every fixed point $p \in M^{S^1}$ of index $0,4,\cdots, 8n-4$. 
		
		Now fix a moment map $H$ for the $S^1$-action on $(M,\omega)$ such that $H$ attains the maximum value $0$.  
		Denote by $p_{\max}$ the maximal fixed point and so $\mathrm{ind}(p_{\max}) = 8n$. 
		The equivariant extension $[\omega_H] \in H^2_{S^1}(M;\R)$ of $\omega$ with respect to the moment map $H$ satisfies 
		\[
			[\omega_H]|_{p} = -H(p)u \in \R[u]
		\]
		for every $p \in M^{S^1}$, see \cite[Proposition 2.6]{Cho1}. Since $H(p) < 0$ for every $p \neq p_{\max}$ by the choice of $H$, we obtain
		\begin{equation}\label{equ_localization}
			0 = \int_M \alpha^2 \cdot [\omega_H] = \sum_{p \in M^{S^1}} \frac{- \alpha^2|_p \cdot H(p) u}{(\prod_{i=1}^n w_i(p)) u^n} = \sum_{\mathrm{ind}(p) \equiv 2 ~(\mathrm{mod} 4)}
			\frac{- \alpha^2|_p \cdot H(p) u}{(\prod_{i=1}^n w_i(p)) u^n}
		\end{equation}
		by the ABBV localization theorem \ref{thm_localization} and the fact $[\omega_H]|_{p_{\max}} = -H(p_{\max})u = 0$. Moreover, there exists at least one fixed point $p \in M^{S^1}$ such 
		that 
		\[
			\alpha|_p \neq 0 \quad \text{and} \quad \mathrm{ind}(p) < 8n
		\]
		because
		\begin{itemize}
			\item $\alpha|_p \neq 0$ for some $p \in M^{S^1}$ by the Kirwan's injectivity theorem \cite{Ki}, and 
			\item if $\alpha|_p = 0$ for every $p \in M^{S^1}$ with $p \neq p_{\max}$, then $\alpha|_{p_{\max}} \neq 0$ and it violates the localization theorem \ref{thm_localization}
			\[
				0 = \int_M \alpha = \frac{\alpha|_{p_{\max}}}{(\prod_{i=1}^n w_i(p)) u^n} \neq 0.
			\]
		\end{itemize}
		Consequently, each summand of the rightmost equation of \eqref{equ_localization} has non-negative coefficient (of $\frac{1}{u}$) and at least one of those should be negative.
		Therefore it leads to a contradiction. 
		
		Now it remains to consider the case of $\dim M = 8n+4$. Under the same assumption \eqref{equ_assumption}, we similarly define 
		\[
			\begin{array}{ccccl}\vs{0.2cm}
				\Phi & : & H^{4n}_{S^1}(M;\R) & \rightarrow &  \ds \left(\R^{b_0} \oplus \R^{b_4} \oplus 
				\cdots \oplus \R^{b_{4n}}\right) \oplus \left(\R^{b_{4n+4}} \oplus \cdots \oplus \R^{b_{8n}} \right) \\
						& 	& \alpha & \mapsto & \left(\alpha_0, \cdots, \alpha_{4n}, \alpha_{4n+4}, \cdots, \alpha_{8n} \right)
			\end{array}
		\]
		with the same identification as in \eqref{equ_identification}. Note that 
		$\dim_{\R}  H^{4n}_{S^1}(M;\R) = b_0 + b_2 + \cdots + b_{4n-2} + b_{4n}$ and 
		\[
			\begin{array}{ccl}
				\dim \mathrm{Im} \Phi \leq b_0 + \cdots + b_{4n} + (b_{4n+4} + \cdots + b_{8n}) & = & b_0 + \cdots + b_{4n} + (b_{4n} + \cdots + b_{4}) \\
																										& < & b_0 + \cdots + b_{4n} + (b_{4n-2} + \cdots + b_{2}) = \dim_{\R}  H^{4n}_{S^1}(M;\R)\\
			\end{array}
		\]
		by the assumption \eqref{equ_assumption} and the Poincar\'{e} duality again. Thus $\Phi$ has a non-trivial kernel $\alpha \in H^{4n}_{S^1}(M;\R)$. In a similar manner as the previous case, we obtain 
		\[
			0 = \int_M \alpha^2 \cdot [\omega_H] = \sum_{\mathrm{ind}(p) \equiv 2 ~(\mathrm{mod} 4)} \frac{- \alpha^2|_p \cdot H(p) u}{(\prod_{i=1}^{n+1} w_i(p)) u^{n+1}} \neq 0
		\]
		which leads to a contradiction. This completes the proof.
	\end{proof}
	
\bibliographystyle{annotation}

\end{document}